\renewenvironment{proof}[1][\proofname] {\par\pushQED{\qed}\normalfont\topsep6\p@\@plus6\p@\relax\trivlist\item[\hskip\labelsep\bfseries#1\@addpunct{.}]\ignorespaces}{\popQED\endtrivlist\@endpefalse}
\newtheorem{theorem}{\bf Theorem}[section]
\newtheorem{lemma}[theorem]{\bf Lemma}
\newtheorem{corollary}[theorem]{\bf Corollary}
\theoremstyle{definition}
\newtheorem{remark}[theorem]{\bf Remark}
\newtheorem{definition}[theorem]{\bf Definition}
\def\mc{\mathrm{mc}}
\def\sp{\mathrm{sp}}
\def\ex{\mathrm{ex}}
\newcommand{\vx}{\mathbf{x}}
\newcommand{\vy}{\mathbf{y}}
\newcommand{\C}{\mathscr{C}}
\newcommand{\N}{\mathbb{N}}
\newcommand{\R}{\mathbb{R}}
\newcommand{\tr}{\text{tr}}
\newcommand{\inprod}[2]{\langle #1, #2 \rangle}
\date{}
\title{On MaxCut and the Lov\'asz theta function}
\author{Igor Balla\thanks{Einstein Institute of Mathematics, Hebrew University of Jerusalem, Israel. Email: \textbf{iballa1990@gmail.com}.}
\and Oliver Janzer\thanks{Department of Pure Mathematics and Mathematical Statistics, University of Cambridge, United Kingdom. Research supported by a fellowship at Trinity College. Email: \textbf{oj224@cam.ac.uk}.}
\and Benny Sudakov\thanks{Department of Mathematics, ETH Z\"urich, Switzerland. Research supported in part by SNSF grant 200021\_196965. Email: \textbf{benjamin.sudakov@math.ethz.ch}.}}
\begin{document}
	
\maketitle

\begin{abstract}
	In this short note we prove a lower bound for the MaxCut of a graph in terms of the Lov\'asz theta function of its complement. We combine this with known bounds on the Lov\'asz theta function of complements of $H$-free graphs to recover many known results on the MaxCut of $H$-free graphs. In particular, we give a new, very short proof of a conjecture of Alon, Krivelevich and Sudakov about the MaxCut of graphs with no cycles of length $r$.
\end{abstract}

\section{Introduction}

The aim of this note is to establish a connection between two very important graph parameters: the MaxCut and the Lov\'asz theta function. The MaxCut of a graph $G$, denoted by $\mathrm{mc}(G)$, is the size of the largest bipartite subgraph in $G$. This parameter has been extensively studied in the last 50 years both in Theoretical Computer Science and in Extremal Graph Theory. One important direction of research has been the study of the smallest possible MaxCut that a graph satisfying some given conditions can have. A very simple probabilistic argument shows that for any graph $G$ with $m$ edges, we have $\mc(G)\geq m/2$. It is therefore natural to consider the so-called \emph{surplus} of $G$, defined as $\sp(G)=\mc(G)-e(G)/2$.

By a classical result of Edwards \cite{Edw73,Edw75}, for every graph $G$ with $m$ edges, we have $\sp(G)\geq \frac{\sqrt{8m+1}-1}{8}$, and this is tight whenever $G$ is a complete graph on an odd number of vertices. With the aim of improving this bound for graphs that are ``far from complete", in the 70s Erd\H os and Lov\'asz \cite{Erd79} initiated the study of the surplus in $H$-free graphs. Given a positive integer $m$ and graph $H$, let $\sp(m,H)$ denote the smallest possible value of $\sp(G)$ for an $H$-free graph $G$ with $m$ edges.

Alon, Bollob\'as, Krivelevich and Sudakov \cite{ABKS03} showed that for every fixed graph $H$ there exist positive constants $\varepsilon=\varepsilon(H)$ and $c=c(H)$ such that $\sp(m,H)\geq cm^{1/2+\varepsilon}$ for all $m$. One of the main conjectures in the area asserts that in fact there are suitable positive constants $c=c(H)$, $\varepsilon=\varepsilon(H)$ such that $\sp(m,H)\geq cm^{3/4+\varepsilon}$, but it is not even known whether there is an absolute constant $\alpha>1/2$ such that for every $H$ we have $\sp(m,H)\geq cm^{\alpha}$, for some $c=c(H)>0$.

Another important direction of research is to determine, for various graphs $H$, the order of magnitude of $\sp(m,H)$. The first graph that was considered is the triangle; it was shown by Erd\H os and Lov\'asz \cite{Erd79} that $\sp(m,K_3)=\Omega(m^{2/3}(\log m/\log \log m)^{1/3})$. This bound was improved by Shearer \cite{She92} to $\sp(m,K_3)=\Omega(m^{3/4})$, and finally Alon \cite{Alon94,Alon96} proved that $\sp(m,K_3)=\Theta(m^{4/5})$. More generally, the surplus in graphs without short cycles has been extensively studied, starting with the work of Erd\H os and Lov\'asz. Alon, Bollob\'as, Krivelevich and Sudakov \cite{ABKS03} showed that any $m$-vertex graph  with girth at least $r+1$ (i.e., not containing a cycle of length at most $r$) has surplus $\Omega_r(m^{(r+1)/(r+2)})$. This was strengthened for even values of $r$ by Alon, Krivelevich and Sudakov \cite{AKS05}, who proved that $\sp(m,C_r)=\Omega_r(m^{(r+1)/(r+2)})$ whenever $r$ is even. They conjectured that the same bound holds for odd values of $r$ as well. Zeng and Hou \cite{ZH18} proved the weaker bound $\sp(m,C_r)\geq m^{(r+1)/(r+3)+o(1)}$, and Fox, Himwich and Mani \cite{FHM23} improved this to $\sp(m,H)=\Omega_r(m^{(r+5)/(r+7)})$. Finally, the conjecture of Alon, Krivelevich and Sudakov was proved by Glock, Janzer and Sudakov \cite{GJS21}. Combined with earlier constructions of Alon and Kahale \cite{AK98}, this showed that $\sp(m,C_r)=\Theta_r(m^{(r+1)/(r+2)})$ holds for all odd $r$. One of the purposes of this note is to provide a novel, short proof of the result of Glock, Janzer and Sudakov. We note that this proof works identically also in the case where $r$ is even.

\begin{theorem} \label{thm:odd cycles}
	For each $r\geq 3$, we have $\sp(m,C_r)=\Omega_r(m^{(r+1)/(r+2)})$.
\end{theorem}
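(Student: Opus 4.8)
The plan is to derive the theorem from a general lower bound on the surplus of an arbitrary graph in terms of the Lov\'asz theta function $\vartheta$ of its complement, combined with a known upper bound on this quantity for $C_r$-free graphs. Concretely, I would first prove that every graph $G$ with $m$ edges satisfies
\[
\sp(G)\ \ge\ \frac{m}{\pi\bigl(\vartheta(\overline G)-1\bigr)}\ \ge\ \frac{m}{\pi\,\vartheta(\overline G)} .
\]
Granting this together with the known estimate that a $C_r$-free graph $G$ with $m$ edges satisfies $\vartheta(\overline G)=O_r\!\bigl(m^{1/(r+2)}\bigr)$, Theorem~\ref{thm:odd cycles} is immediate: $\sp(G)\ge \frac{m}{\pi\,\vartheta(\overline G)}=\Omega_r\!\bigl(m^{1-1/(r+2)}\bigr)=\Omega_r\!\bigl(m^{(r+1)/(r+2)}\bigr)$. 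Nothing here uses the parity of $r$, so the even case is covered simultaneously; and for the extremal $C_r$-free constructions of Alon and Kahale both inequalities above are tight up to constants, so that $\vartheta(\overline G)=\Theta_r\!\bigl(m^{1/(r+2)}\bigr)$ there.

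To prove the general inequality I would use the eigenvalue (dual) form of the theta function: $\vartheta(\overline G)$ equals the minimum of $\lambda_{\max}(M)$ over real symmetric matrices $M$ with $M_{ii}=1$ for all $i$ and $M_{ij}=1$ for every edge $ij$ of $G$. Fix an optimal such $M$ and set $\vartheta=\vartheta(\overline G)$. Since $\vartheta I-M\succeq 0$, it is the Gram matrix of vectors $(w_i)_{i\in V(G)}$ with $\|w_i\|^2=\vartheta-1$ and $\inprod{w_i}{w_j}=-1$ whenever $ij\in E(G)$; hence the unit vectors $v_i=w_i/\sqrt{\vartheta-1}$ satisfy $\inprod{v_i}{v_j}=-1/(\vartheta-1)$ along every edge. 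Now round by a random hyperplane: for a Gaussian vector $g$ set $\varepsilon_i=\operatorname{sign}\inprod{g}{v_i}$, so that an edge $ij$ is cut with probability $\tfrac1\pi\arccos\inprod{v_i}{v_j}=\tfrac1\pi\arccos\!\bigl(-\tfrac1{\vartheta-1}\bigr)\ge \tfrac12+\tfrac1{\pi(\vartheta-1)}$, using $\arccos(-x)=\tfrac\pi2+\arcsin x\ge \tfrac\pi2+x$ for $x\in[0,1]$. Summing over the $m$ edges and using linearity of expectation, some cut has size at least $\tfrac m2+\tfrac m{\pi(\vartheta-1)}$, which is exactly the asserted bound on $\sp(G)$.

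The only external ingredient is the bound $\vartheta(\overline G)=O_r\!\bigl(m^{1/(r+2)}\bigr)$ for $C_r$-free $G$, which I would cite from the literature on the theta function of $H$-free graphs; if a statement in precisely this form — phrased in terms of $m$ rather than the vertex count, and with no regularity hypothesis — were not readily available, I would reprove it from the Hoffman/ratio-type eigenvalue bound for $\vartheta(\overline G)$ together with the Bondy--Simonovits and Moore-type bounds on the edge count of $C_r$-free graphs, which is where essentially all the remaining work would sit. The real crux, however, is the general inequality itself, after which the theorem is a one-line corollary. There the one point requiring care is to work with the \emph{dual} SDP form of $\vartheta(\overline G)$ rather than the primal one: this is precisely what yields vector labels with a fixed \emph{negative} inner product along every edge, so that each edge is individually cut with probability bounded away from $1/2$ by the same amount; the customary constant-factor loss in Goemans--Williamson rounding is irrelevant, since we only want $\sp(m,C_r)$ up to a constant.
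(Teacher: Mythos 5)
Your first step---the general inequality $\sp(G)\ge \frac{m}{\pi(\vartheta(\overline G)-1)}$---is correct and is exactly the paper's Theorem~\ref{thm:MaxCut bound with theta}; your derivation via the dual eigenvalue formulation of $\vartheta$ followed by Goemans--Williamson hyperplane rounding is the same argument the paper uses (phrased there through the strict vector chromatic number and Corollary~\ref{cor:SDP}), and the overall architecture of your proof matches the paper's.

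The gap is in the second ingredient. The bound $\vartheta(\overline G)=O_r\bigl(m^{1/(r+2)}\bigr)$ for $C_r$-free graphs with $m$ edges is not available in the literature in that form, and the fallback you sketch would not produce it. What is known (from \cite{BLS20}) is the vertex-count bound $\lambda(n,C_r)=O_r(n^{1/r})$; substituting $n\le 2m$ gives only $O_r(m^{1/r})$ and hence only $\sp=\Omega_r(m^{(r-1)/r})$, which is weaker than claimed. Your proposed route via Bondy--Simonovits fails outright for odd $r$ (complete bipartite graphs are $C_r$-free for odd $r$, so there is no nontrivial Tur\'an bound), and a Hoffman-type eigenvalue bound needs regularity and control of the smallest eigenvalue that $C_r$-freeness does not supply. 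The conversion from the $n^{1/r}$ bound to the $m^{1/(r+2)}$ bound is where the paper does the remaining work (Lemma~\ref{lemma:vertex to edge}): split $V(G)$ into vertices of degree at most $d$ and the rest; the high-degree part has at most $2m/d$ vertices, so its theta is $O_r((m/d)^{1/r})$; for the low-degree part one needs not merely $\vartheta\le\Delta+1$ but the refined bound $\vartheta\bigl(\overline{G[S]}\bigr)^2\le(1+\Delta)\bigl(1+\max_v\vartheta\bigl(\overline{G[S\cap N(v)]}\bigr)\bigr)=O_r(d)$, which exploits the fact that neighborhoods in a $C_r$-free graph are $P_{r-1}$-free and hence have bounded chromatic number; optimizing $d=m^{2/(r+2)}$ then yields $O_r(m^{1/(r+2)})$. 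Without this two-tier degree-splitting argument your proof does not reach the exponent $(r+1)/(r+2)$.
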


We will also use our method to give a new proof of the following results of Alon, Krivelevich and Sudakov\footnote{The latter of these two results is not explicitly stated in \cite{AKS05}, but can be obtained straightforwardly using their method.}.

\begin{theorem} \label{thm:vxplusforest}
    If $H$ is a graph that has a vertex whose removal makes the graph acyclic, then $\sp(m,H)=\Omega_H(m^{4/5})$.
\end{theorem}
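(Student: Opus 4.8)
The plan is to combine the MaxCut--theta inequality proved below, namely that $\sp(G)=\Omega\bigl(m/\vartheta(\overline{G})\bigr)$ for every graph $G$ with $m$ edges, with a bound on the Lov\'asz theta function of the complement of an $H$-free graph. Indeed, it then suffices to show that if $H$ has a vertex $v_0$ such that $F:=H-v_0$ is a forest, then every $H$-free graph $G$ with $m$ edges satisfies $\vartheta(\overline{G})=O_H\bigl(m^{1/5}\bigr)$; feeding this into the inequality gives $\sp(G)=\Omega_H\bigl(m/m^{1/5}\bigr)=\Omega_H\bigl(m^{4/5}\bigr)$.

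To bound $\vartheta(\overline{G})$, I would first extract local sparsity from the $H$-freeness of $G$. Put $f:=|V(F)|$ and observe that $H$ is a subgraph of the graph obtained from $F$ by adding one new vertex joined to all of $V(F)$. Hence, for every vertex $w$ of $G$, the induced subgraph $G[N_G(w)]$ must be $F$-free: a copy of $F$ inside $N_G(w)$ would, together with $w$, span a copy of $H$. Since a graph of minimum degree at least $f$ contains every forest on $f$ vertices (greedily embed it one component, and within a component one vertex, at a time), an $F$-free graph is $(f-1)$-degenerate, and therefore $e\bigl(G[N_G(w)]\bigr)\le (f-1)\,d_G(w)=O_H(d_G(w))$ for every $w$. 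Thus $G$ is locally very sparse: each neighbourhood spans only linearly many edges, which is precisely the sort of condition under which the least adjacency eigenvalue is forced to be large in absolute value and, correspondingly, $\vartheta(\overline{G})$ is small --- this is the mechanism behind the spectral method of Alon, Bollob\'as, Krivelevich and Sudakov \cite{ABKS03} and of Alon, Krivelevich and Sudakov \cite{AKS05}. Plugging our local sparsity bound into the known estimates of this type yields $\vartheta(\overline{G})=O_H\bigl(m^{1/5}\bigr)$. Writing $\overline{d}=2m/n$ for the average degree, the natural shape of these estimates is $\vartheta(\overline{G})=O_H\bigl(\sqrt{m/n}\bigr)$ when $\overline{d}\le n^{2/3}$ and the (then stronger) $\vartheta(\overline{G})=O_H\bigl(n^2/m\bigr)$ when $\overline{d}> n^{2/3}$; splitting at $n=(2m)^{3/5}$, a short computation shows that both are $O_H\bigl(m^{1/5}\bigr)$, and this completes the proof.

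I expect the theta estimate in the last step to be the main obstacle. The qualitative principle ``locally sparse implies small Lov\'asz theta of the complement'' is classical, but squeezing out the precise exponent $m^{1/5}$ uniformly in $m$ --- rather than a bound phrased through $n$ and the degree sequence --- forces one to balance the dense regime, where local sparsity is a genuinely strong constraint, against the sparse regime, where $\vartheta(\overline{G})$ is small for the trivial reason that the average degree is. A minor technical point is that the apex $v_0$ need not be adjacent to all of $F$ inside $H$, but this is harmless: the reduction above only uses that a copy of $F$ anywhere inside some neighbourhood of $G$ already produces a copy of $H$.
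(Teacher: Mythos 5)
Your overall strategy coincides with the paper's: combine $\sp(G)\geq \frac{1}{\pi}\cdot\frac{m}{\vartheta(\bar G)-1}$ with the estimate that every $H$-free graph with $m$ edges has $\vartheta(\bar G)=O_H(m^{1/5})$, and your local-structure observation (every neighbourhood is $F$-free for the forest $F=H-v_0$, hence $O_H(1)$-degenerate and in particular $O_H(1)$-colourable) is genuinely one of the ingredients the paper uses. The gap is that the estimate $\vartheta(\bar G)=O_H(m^{1/5})$ --- which you yourself flag as the main obstacle --- is asserted rather than proved, and the route you sketch for it cannot work as stated. Both of your ``natural shape'' bounds are phrased in terms of the average degree $2m/n$, but $\vartheta(\bar G)$ is not controlled by the average degree: adding isolated vertices to $G$ leaves both $m$ and $\vartheta(\bar G)$ unchanged while driving $\sqrt{m/n}$ to $0$, whereas $\vartheta(\bar G)\geq \omega(G)\geq 2$ whenever $G$ has an edge; so the claimed bound $\vartheta(\bar G)=O_H(\sqrt{m/n})$ in the regime $\bar d\leq n^{2/3}$ is false. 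Moreover, the ``known estimates'' of Alon--Bollob\'as--Krivelevich--Sudakov type that you invoke bound the smallest adjacency eigenvalue and the surplus directly, not $\vartheta(\bar G)$, and transferring them to the theta function is not automatic.

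What is actually needed (and what the paper does) is two separate arguments. First, a vertex-count bound $\lambda(n,H)=O_H(n^{1/3})$: taking an optimal orthonormal representation with Gram matrix $M$ (vectors of non-adjacent vertices orthogonal), one bounds $\vartheta(\bar G)^3\leq \tr(M^3)=\sum_{u}\vx_u^\intercal P_u^2\vx_u$ with $P_u=\sum_{v\in N(u)\cup\{u\}}\vx_v\vx_v^\intercal$, and uses the partition of $N(u)$ into $O_H(1)$ independent sets --- whose vectors are pairwise orthogonal, so each partial sum has operator norm at most $1$ --- to get $\lambda_1(P_u)=O_H(1)$ and hence $\vartheta(\bar G)^3=O_H(n)$. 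Second, a degree-splitting step replacing your average-degree heuristic: with $S$ the vertices of degree at most $d$ and $T$ the rest, $\vartheta(\bar G)\leq \vartheta(\overline{G[S]})+\vartheta(\overline{G[T]})$; the high-degree part has $|T|\leq 2m/d$ vertices and is handled by the vertex bound, giving $O_H((m/d)^{1/3})$, while the low-degree part is handled by a row-sum/Cauchy--Schwarz estimate $\vartheta(\overline{G[S]})^2\leq (1+d)\bigl(1+\max_v \vartheta(\overline{G[S\cap N(v)]})\bigr)=O_H(d)$, using again that neighbourhoods are $F$-free so the theta functions of their complements are $O_H(1)$. Choosing $d=m^{2/5}$ balances the two terms at $O_H(m^{1/5})$. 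Without some version of this two-regime argument keyed to maximum (not average) degree, your proof does not go through.
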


\begin{theorem} \label{thm:vxplusforestturan}
    If $H$ is a graph that has a vertex whose removal makes the graph acyclic and its Tur\'an number satisfies $\ex(n,H)=O_{H}(n^{1+\alpha})$ for some constant $\alpha$, then $\sp(m,H)=\Omega_{H}(m^{\frac{\alpha+2}{2\alpha+2}})$.
\end{theorem}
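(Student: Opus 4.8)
The plan is to feed the known upper bounds for the Lov\'asz theta function of complements of $H$-free graphs into the lower bound for $\sp$ in terms of $\vartheta(\overline{\cdot})$ established above, and then to optimise over a suitable induced subgraph using the Tur\'an bound $\ex(n,H)=O_H(n^{1+\alpha})$. Throughout I will use the main inequality of this note in the form $\sp(G)=\Omega\!\big(e(G)/\vartheta(\overline G)\big)$, and the elementary fact that the surplus does not decrease under passing to induced subgraphs: for any $W\subseteq V(G)$ one has $\sp(G)\ge \sp(G[W])$. (Take an optimal bipartition of $G[W]$ and extend it to all of $V(G)$ by adding the vertices of $V(G)\setminus W$ one at a time, each time putting the new vertex on the side that cuts at least half of its back-edges; the resulting cut has size at least $\mc(G[W])+\tfrac12\big(e(G)-e(G[W])\big)$.)

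Now let $G$ be $H$-free with $m$ edges. The first step is a routine cleanup: delete vertices of below-average degree and, if necessary, pigeonhole the degree sequence, to obtain an induced subgraph $F=G[W]$ which is essentially $d$-regular for some $d$ and still has $e(F)$ a fixed polynomial fraction of $m$ (and one disposes separately of the degenerate case in which no such $F$ of super-constant degree exists: there $F=G$ itself has bounded degree, hence $\vartheta(\overline G)=O_H(1)$ by the trivial bound $\vartheta(\overline G)\le 1+\lambda_1(G)\le 1+\Delta(G)$, and so $\sp(G)=\Omega_H(m)$, which is far stronger than required). Since $H$ has a vertex $v$ whose removal is a forest, $F$ is locally sparse — every neighbourhood $N_F(u)$ spans $O_H(\deg_F(u))$ edges, because otherwise $N_F(u)$ would contain the forest $H-v$ and hence $F$ would contain $H$. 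Plugging this into the known bound on the theta function of complements of (locally sparse) $H$-free graphs gives $\vartheta(\overline F)=O_H(\sqrt d)$, and therefore
\[
\sp(G)\ \ge\ \sp(F)\ =\ \Omega_H\!\left(\frac{e(F)}{\vartheta(\overline F)}\right)\ =\ \Omega_H\!\left(\frac{e(F)}{\sqrt d}\right)\ =\ \Omega_H\big(|W|\sqrt d\big).
\]

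It remains to optimise. As $F$ is $H$-free, $e(F)\le \ex(|W|,H)=O_H(|W|^{1+\alpha})$, so $d=O_H(|W|^{\alpha})$, and since $e(F)=\Omega(m)$ we also get $|W|=\Omega_H\big(e(F)^{1/(1+\alpha)}\big)=\Omega_H\big(m^{1/(1+\alpha)}\big)$. Combining these with the displayed bound and carrying out the elementary algebra yields $\sp(G)=\Omega_H\!\big(m^{(\alpha+2)/(2\alpha+2)}\big)$, as claimed. (If one is willing to invoke the theta bound directly in the aggregated form $\vartheta(\overline F)=O_H\big(e(F)^{\alpha/(2\alpha+2)}\big)$ for $H$-free $F$, the cleanup becomes unnecessary and the theorem drops out of a single application of $\sp(G)=\Omega(e(G)/\vartheta(\overline G))$.)

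\textbf{Main obstacle.} The delicate point is the cleanup step: one needs an induced subgraph that is both dense (retaining a constant-order fraction of the edges, not merely a polylogarithmic one, since a $\log$ loss would push the bound below $m^{(\alpha+2)/(2\alpha+2)}$) and regular enough that the spectral-type bound $\vartheta(\overline F)=O_H(\sqrt d)$ applies. This is handled by first removing the (few) vertices of atypically large degree — using local sparsity to see that a very high degree vertex forces its neighbourhood, and hence $F$, to be sparse — and only then equalising the remaining degrees; after that the argument is just a chain of the inequalities above. The other ingredient, the precise statement and proof of the bound $\vartheta(\overline F)=O_H(\sqrt d)$ for locally sparse $d$-regular $H$-free $F$, is exactly the ``known bound'' referred to, so I would simply cite it.
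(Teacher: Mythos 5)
Your top-level strategy is the paper's: combine $\sp(G)=\Omega\big(m/\vartheta(\overline G)\big)$ with an upper bound on $\vartheta(\overline G)$ for $H$-free $G$ in terms of $m$, where the bound $O_H(\sqrt{\Delta})$ for graphs of bounded maximum degree comes from the local argument (neighbourhoods contain no copy of the forest $H-v$, hence have bounded chromatic number) and the Tur\'an hypothesis supplies the final exponent. The algebra at the end is also correct. But the step you yourself flag as the ``main obstacle'' is a genuine gap, and in the form you state it, it is false: an $H$-free graph with $m$ edges need not contain \emph{any} induced subgraph that simultaneously retains $\Omega(m)$ edges and is essentially regular (maximum degree comparable to average degree). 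For instance, take a vertex-disjoint union of complete bipartite graphs $K_{2^i,2^i}$, with multiplicities chosen so that each of the $k$ levels $i=1,\dots,k$ carries $m/k$ edges; this is $H$-free for every non-bipartite $H$. Any induced subgraph keeping $cm$ edges must draw $\Omega(m/k)$ edges from some level $i\geq ck/2$, which forces maximum degree $2^{\Omega(k)}$, while its number of edges is at most $m$ — so the average degree is exponentially smaller than the maximum degree no matter which induced subgraph you take. Thus ``removing the few atypically high-degree vertices and equalising the rest'' cannot produce what you need, and the only generic regularization (dyadic degree classes) costs the $\log$ factor you correctly identify as fatal.

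The paper avoids regularization entirely. It partitions $V(G)$ into $S=\{v: d(v)\leq d\}$ and $T=V(G)\setminus S$ and uses the subadditivity $\vartheta(\overline G)\leq \vartheta(\overline{G[S]})+\vartheta(\overline{G[T]})$ (an easy consequence of Lemma \ref{lem:Lovasz Gram}). The low-degree part is handled exactly as in your sketch, giving $\vartheta(\overline{G[S]})=O_H(\sqrt d)$. The high-degree part is \emph{not} discarded or tamed: since $|T|\leq 2m/d$, its contribution is controlled by the vertex-count bound $\lambda(n,H)=O_H(n^{\alpha/2})$ from \cite{BLS20}, giving $\vartheta(\overline{G[T]})=O_H((m/d)^{\alpha/2})$; optimizing $d$ yields $\vartheta(\overline G)=O_H(m^{\alpha/(2\alpha+2)})$ for the whole graph, and the theorem follows in one application of Theorem \ref{thm:MaxCut bound with theta}. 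This vertex-count input is an ingredient your proposal never uses, and it is precisely what replaces the impossible cleanup. Your closing parenthetical — that the theorem is immediate once one has the aggregated bound $\vartheta(\overline F)=O_H(e(F)^{\alpha/(2\alpha+2)})$ — is correct, but that aggregated bound is the actual content of the proof (Lemma \ref{lemma:vertex to edge}), not something that can be cited around the gap.
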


\noindent We remark that Theorem \ref{thm:vxplusforest} is tight for the triangle and Theorem \ref{thm:vxplusforestturan} is tight for $K_{2,t}$ \cite{AKS05}. Theorem \ref{thm:vxplusforestturan} is tight also for the cycles of length $4$, $6$ and $10$ (see \cite{AKS05}), and it is possibly tight for all even cycles, but corresponding extremal constructions are not known even for the Tur\'an number.
The three theorems above together recover many known results on the MaxCut of $H$-free graphs (and most of those where the right exponent is known), with a unified approach and short proofs in each case.

We will derive these theorems from a new general lower bound on the surplus of an arbitrary graph. This bound involves the Lov\'asz theta function of the complement of the graph. Lov\'asz \cite{Lov79} introduced this important graph parameter in order to bound the Shannon capacity of a graph. One reason why the Lov\'asz theta function is useful is that it has many equivalent definitions. Many of them have already appeared in Lov\'asz's work, and several others have been found since. The definition that will be the most useful for us is as follows. (Note that it is well-defined as if the vectors $\vx_v$ correspond to the vertices of a regular simplex whose centre is in the origin, then all $\langle \vx_u,\vx_v\rangle$ are the same and negative, showing that there exists some $\kappa\geq 2$ for which $\langle \vx_u,\vx_v\rangle=-\frac{1}{\kappa-1}$ for all $u\neq v$.)

\begin{definition} \label{def:strict vector chromatic}
    The Lov\'asz theta function of a graph $G$, denoted $\vartheta(G)$, is the minimal $\kappa \geq 2$ for which there exists a unit vector $\vx_v$ (in some Euclidean space) for each vertex $v$ such that $\langle \vx_u,\vx_v\rangle=-\frac{1}{\kappa-1}$ holds whenever $u$ and $v$ are distinct vertices in $G$ and $uv\not \in E(G)$.
\end{definition}

\noindent The aforementioned definition was actually introduced by Karger, Motwani, and Sudan \cite{KMS98} as the so-called strict vector chromatic number of the complement of $G$, who showed that it is equivalent to $\vartheta(G)$. See Knuth \cite{Knuth94} for a survey on the Lov\'asz theta function and its applications.

We are now ready to state the main result of this paper.

\begin{theorem} \label{thm:MaxCut bound with theta}
    Let $G$ be a graph which has $m$ edges and let $\vartheta = \vartheta \left( \bar{G} \right)$ be the Lov\'asz theta function of the complement graph $\bar{G}$. Then
    $$\sp(G)\geq \frac{1}{\pi}\cdot \frac{m}{\vartheta-1}.$$
\end{theorem}

Theorem \ref{thm:MaxCut bound with theta} will in fact be deduced from a slightly stronger result in which we replace $\vartheta$ by the vector chromatic number, also defined by Karger, Motwani, and Sudan \cite{KMS98}.

\begin{definition}
    The vector chromatic number of a graph $G$, denoted $\chi_{\textrm{vec}}(G)$, is the minimal $\kappa \geq 2$ for which there exists a unit vector $\vx_v$ (in some Euclidean space) for each vertex $v$ such that $\langle \vx_u,\vx_v\rangle\leq -\frac{1}{\kappa-1}$ holds whenever $u$ and $v$ are distinct vertices in $G$ and $uv \in E(G)$.
\end{definition}

Note that it is trivial from the definitions that for any graph $G$ we have $\vartheta\left(\bar{G}\right)\geq \chi_{\textrm{vec}}(G)$. Hence, the following result strengthens Theorem \ref{thm:MaxCut bound with theta}.

\begin{theorem} \label{thm:MaxCut bound with vecchrom}
    Let $G$ be a graph which has $m$ edges. Then
    $$\sp(G)\geq \frac{1}{\pi}\cdot \frac{m}{\chi_{\textrm{vec}}(G)-1}.$$
\end{theorem}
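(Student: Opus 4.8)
The plan is to use the vectors $\vx_v$ witnessing $\chi_{\textrm{vec}}(G)=\kappa$ to produce, via a random hyperplane rounding argument, a cut whose expected size exceeds $m/2$ by the claimed amount. Concretely, pick a uniformly random unit vector $\vr$ (a Gaussian direction) and let $S=\{v : \inprod{\vx_v}{\vr}\geq 0\}$; this defines a cut $(S, V(G)\setminus S)$. For a single edge $uv$, the probability that $u$ and $v$ land on opposite sides of the hyperplane through the origin with normal $\vr$ equals $\theta_{uv}/\pi$, where $\theta_{uv}=\arccos\inprod{\vx_u}{\vx_v}$ is the angle between the two unit vectors — this is the classical Goemans–Williamson identity, and it is the only nontrivial geometric input. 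Since $uv\in E(G)$ forces $\inprod{\vx_u}{\vx_v}\leq -\tfrac{1}{\kappa-1}$, we get $\theta_{uv}\geq \arccos\!\left(-\tfrac{1}{\kappa-1}\right)$, so each edge is cut with probability at least $\tfrac1\pi\arccos\!\left(-\tfrac{1}{\kappa-1}\right)$.

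Summing over the $m$ edges and using linearity of expectation, the expected number of cut edges is at least $\tfrac{m}{\pi}\arccos\!\left(-\tfrac{1}{\kappa-1}\right)$, hence $\mc(G)$ is at least this quantity, and therefore
$$\sp(G) = \mc(G) - \frac{m}{2} \geq \frac{m}{\pi}\arccos\!\left(-\frac{1}{\kappa-1}\right) - \frac{m}{2} = \frac{m}{\pi}\left(\arccos\!\left(-\frac{1}{\kappa-1}\right) - \frac{\pi}{2}\right).$$
Now $\arccos\!\left(-\frac{1}{\kappa-1}\right)-\frac{\pi}{2} = \arcsin\!\left(\frac{1}{\kappa-1}\right)$, and since $\arcsin t \geq t$ for $t\in[0,1]$, this is at least $\frac{m}{\pi}\cdot\frac{1}{\kappa-1}$, which is exactly $\frac1\pi\cdot\frac{m}{\chi_{\textrm{vec}}(G)-1}$. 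Finally, Theorem \ref{thm:MaxCut bound with theta} follows immediately since $\vartheta(\bar G)\geq \chi_{\textrm{vec}}(G)$ makes the right-hand side only larger when $\chi_{\textrm{vec}}(G)$ is replaced by $\vartheta(\bar G)$.

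The only genuine obstacle is justifying the identity $\Pr[\operatorname{sign}\inprod{\vx_u}{\vr}\neq\operatorname{sign}\inprod{\vx_v}{\vr}] = \theta_{uv}/\pi$; this is standard (project onto the plane spanned by $\vx_u,\vx_v$ and observe the projected direction is uniform on the circle), but one should state it cleanly, perhaps as a lemma, and handle the measure-zero event $\inprod{\vx_v}{\vr}=0$ by an arbitrary tie-breaking rule. Everything else is elementary: linearity of expectation to pass from a random cut to a guaranteed cut of at least the expected size, and the inequality $\arcsin t\geq t$. No convexity or duality theory for $\vartheta$ is needed here — the work of relating $\vartheta(\bar G)$ to $H$-freeness is deferred to the applications.
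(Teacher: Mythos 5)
Your proposal is correct and is essentially the paper's proof: the paper packages the random-hyperplane computation into Lemma \ref{lemma:general SDP} (quoted from Goemans--Williamson) and then applies $\arcsin(x)\leq x$ for $x\leq 0$ in Corollary \ref{cor:SDP}, which is exactly your $\arccos\left(-\tfrac{1}{\kappa-1}\right)-\tfrac{\pi}{2}=\arcsin\left(\tfrac{1}{\kappa-1}\right)\geq\tfrac{1}{\kappa-1}$ step unpacked. The only difference is presentational: you re-derive the hyperplane-rounding identity inline rather than citing it as a black box.
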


\section{The proof of Theorem \ref{thm:MaxCut bound with vecchrom}}

In this section we prove Theorem \ref{thm:MaxCut bound with vecchrom}. In the proof we will use the semidefinite programming technique. This method was first utilised in the algorithmic context of the MaxCut problem by Goemans and Williamson \cite{GW95}. Carlson, Kolla, Li, Mani, Sudakov and Trevisan \cite{CKLMST21} were the first ones to apply the technique to bound the MaxCut of $H$-free graphs, and Glock, Janzer and Sudakov \cite{GJS21} built on their approach to obtain several tight results in this context. The idea is to assign vectors $\vx_v$ (in some Euclidean space $\mathbb{R}^N$) to the vertices of the given graph $G$ in a way that for a typical edge $uv$ in $G$, the inner product $\langle \vx_u,\vx_v\rangle$ is negative. We then define a random cut as follows. Choose a unit vector $\textbf{z}\in \mathbb{R}^N$ uniformly at random and let $A=\{v\in V(G): \langle \vx_v,\textbf{z}\rangle\geq 0\}$ and $B=\{v\in V(G): \langle \vx_v,\textbf{z}\rangle < 0\}$. Now if an edge $uv$ in $G$ has $\langle \vx_u,\vx_v\rangle<0$, it belongs to the cut between $A$ and $B$ with probability more than $1/2$. Hence, by the choice of our vectors, we get a  random cut which on average contains more than half of the edges. This is made precise by the following lemma, which is essentially due to Goemans and Williamson \cite{GW95} (see \cite{GJS21} for a proof of this formulation of the result).

\begin{lemma} \label{lemma:general SDP}
    Let $G$ be a graph and let $N$ be a positive integer. Then, for any set of non-zero vectors $\{\vx_v:v\in V(G)\}\subset \mathbb{R}^N$, we have $\sp(G)\geq -\frac{1}{\pi}\sum_{uv\in E(G)} \arcsin\left(\frac{\langle \vx_u,\vx_v \rangle}{\|\vx_u\|\|\vx_v\|}\right)$.
\end{lemma}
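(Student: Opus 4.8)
The plan is to use the random hyperplane rounding of Goemans and Williamson, exactly as sketched in the paragraph preceding the lemma. Since replacing each $\vx_v$ by $\vx_v/\|\vx_v\|$ changes neither the quantity $\langle \vx_u,\vx_v\rangle/(\|\vx_u\|\|\vx_v\|)$ nor the cut we are about to form, I may assume that every $\vx_v$ is a unit vector. Pick $\mathbf{z}\in\mathbb{R}^N$ uniformly at random on the unit sphere (equivalently, $\mathbf{z}=\mathbf{g}/\|\mathbf{g}\|$ for a standard Gaussian $\mathbf{g}$; only $\mathrm{sign}\langle \vx_v,\mathbf{z}\rangle$ will matter), set $A=\{v\in V(G):\langle \vx_v,\mathbf{z}\rangle\ge 0\}$, $B=V(G)\setminus A$, and let $X$ be the number of edges of $G$ with one endpoint in $A$ and one in $B$. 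Since the bipartite subgraph with parts $A,B$ has exactly these $X$ edges, $\mc(G)\ge X$ holds for every choice of $\mathbf{z}$, and hence $\mc(G)\ge \mathbb{E}[X]$.

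The heart of the argument is the identity $\Pr[uv\text{ is cut}]=\theta_{uv}/\pi$ for each edge $uv$, where $\theta_{uv}\in[0,\pi]$ denotes the angle between $\vx_u$ and $\vx_v$, so that $\cos\theta_{uv}=\langle \vx_u,\vx_v\rangle$. To see this, note that whether $v\in A$ depends only on the orthogonal projection of $\mathbf{z}$ onto the two-dimensional subspace $P=\mathrm{span}(\vx_u,\vx_v)$ (we may assume $\vx_u,\vx_v$ are not parallel, otherwise $\theta_{uv}\in\{0,\pi\}$ and the claim is trivial). By rotational invariance of the uniform measure on the sphere (or of the Gaussian), this projection, conditioned to be nonzero, points in a uniformly random direction of $P$; and $u,v$ end up on opposite sides of the hyperplane precisely when that direction lies in the symmetric difference of the half-planes $\{w\in P:\langle w,\vx_u\rangle\ge 0\}$ and $\{w\in P:\langle w,\vx_v\rangle\ge 0\}$, which is a region of total angular measure $2\theta_{uv}$ out of $2\pi$. (The events $\langle \vx_u,\mathbf{z}\rangle=0$ and $\langle \vx_v,\mathbf{z}\rangle=0$ have probability zero and can be ignored.) Summing over the edges and using linearity of expectation, $\mathbb{E}[X]=\sum_{uv\in E(G)}\theta_{uv}/\pi$.

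It then remains to subtract $e(G)/2$ and translate angles into arcsines via $\theta_{uv}=\arccos\langle \vx_u,\vx_v\rangle=\pi/2-\arcsin\langle \vx_u,\vx_v\rangle$:
$$\sp(G)=\mc(G)-\frac{e(G)}{2}\ge \mathbb{E}[X]-\frac{e(G)}{2}=\sum_{uv\in E(G)}\left(\frac{\theta_{uv}}{\pi}-\frac12\right)=-\frac1\pi\sum_{uv\in E(G)}\arcsin\langle \vx_u,\vx_v\rangle,$$
which, after restoring the original (unnormalised) vectors, is exactly the asserted inequality.

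I expect the only genuine obstacle to be making the planar reduction in the middle paragraph fully rigorous: one has to check that $\mathrm{sign}\langle \vx_v,\mathbf{z}\rangle$ is indeed a function of the projection of $\mathbf{z}$ onto $P$ alone, that this projection (conditioned nonzero) is uniform on the unit circle of $P$, and that the coincidence events above are null. Once those points are granted, the separation probability is a one-line fact of two-dimensional geometry and everything else is routine bookkeeping.
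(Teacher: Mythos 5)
Your argument is correct: it is precisely the Goemans--Williamson random hyperplane rounding sketched in the paragraph preceding the lemma, which is the argument the paper relies on (citing \cite{GW95} and \cite{GJS21} rather than reproving it), and your handling of the normalisation, the two-dimensional projection, the null coincidence events, and the identity $\arccos(t)=\pi/2-\arcsin(t)$ is all sound. Nothing further is needed.
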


We will use the following straightforward corollary of Lemma \ref{lemma:general SDP}.

\begin{corollary} \label{cor:SDP}
    Let $G$ be a graph and let $N$ be a positive integer. Then, for any set of unit vectors $\{\vx_v:v\in V(G)\}\subset \mathbb{R}^N$ such that $\langle \vx_u,\vx_v\rangle\leq 0$ whenever $uv\in E(G)$, we have $\sp(G)\geq -\frac{1}{\pi}\sum_{uv\in E(G)} \langle \vx_u,\vx_v \rangle$.
\end{corollary}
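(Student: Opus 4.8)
The plan is to derive the corollary directly from Lemma \ref{lemma:general SDP} by replacing each $\arcsin$ term with a linear lower bound. First I would observe that since every $\vx_v$ is a unit vector, we have $\|\vx_u\|\|\vx_v\|=1$ for all $u,v$, so the quantity appearing inside the $\arcsin$ in Lemma \ref{lemma:general SDP} is simply $\langle \vx_u,\vx_v\rangle$. Thus Lemma \ref{lemma:general SDP} gives
$$\sp(G)\geq -\frac{1}{\pi}\sum_{uv\in E(G)} \arcsin\left(\langle \vx_u,\vx_v \rangle\right).$$

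Next I would establish the elementary inequality $\arcsin(t)\leq t$ for all $t\in[-1,0]$. This follows by considering $f(t)=\arcsin(t)-t$: we have $f(0)=0$ and $f'(t)=\frac{1}{\sqrt{1-t^2}}-1\geq 0$ on $(-1,1)$, so $f$ is nondecreasing, and hence $f(t)\leq f(0)=0$ for $t\leq 0$. By hypothesis each edge $uv\in E(G)$ satisfies $\langle \vx_u,\vx_v\rangle\leq 0$, and of course $\langle \vx_u,\vx_v\rangle\geq -1$ since the vectors are unit, so we may apply this inequality with $t=\langle \vx_u,\vx_v\rangle$ to obtain $\arcsin\left(\langle \vx_u,\vx_v \rangle\right)\leq \langle \vx_u,\vx_v \rangle$ for every edge.

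Finally I would sum this over all edges and multiply by $-\frac{1}{\pi}$ (which reverses the inequality), yielding
$$-\frac{1}{\pi}\sum_{uv\in E(G)} \arcsin\left(\langle \vx_u,\vx_v \rangle\right)\geq -\frac{1}{\pi}\sum_{uv\in E(G)} \langle \vx_u,\vx_v \rangle,$$
and combining with the displayed consequence of Lemma \ref{lemma:general SDP} gives the claim. There is no real obstacle here: the only content beyond invoking Lemma \ref{lemma:general SDP} is the monotonicity argument for $\arcsin$ near $0$, which is the point at which the sign hypothesis $\langle \vx_u,\vx_v\rangle\leq 0$ on edges is essential.
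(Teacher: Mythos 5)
Your proposal is correct and follows essentially the same route as the paper: apply Lemma \ref{lemma:general SDP} with unit vectors so that the normalization disappears, then replace $\arcsin(t)$ by its linear upper bound $t$ for $t\leq 0$ (the paper justifies this via $\sin(x)\geq x$ for $x\leq 0$ rather than your monotonicity argument, but this is the same elementary fact). No gaps.
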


\begin{proof}
    By Lemma \ref{lemma:general SDP} and since $\vx_v$ is a unit vector for each $v\in V(G)$, we have $\sp(G)\geq -\frac{1}{\pi}\sum_{uv\in E(G)} \arcsin(\langle \vx_u,\vx_v \rangle)$. Recall that $\sin(x)\geq x$ for all $x\leq 0$ and therefore $\arcsin(x) \leq x$. Since $\langle \vx_u,\vx_v\rangle\leq 0$ whenever $uv\in E(G)$, the result follows.
\end{proof}

It is now very simple to prove Theorem \ref{thm:MaxCut bound with vecchrom}. Indeed, given a graph $G$, we may choose a unit vector $\vx_v$ for each vertex $v\in V(G)$ such that $\langle \vx_u,\vx_v\rangle\leq -\frac{1}{\chi_{\textrm{vec}}(G)-1}$ holds whenever $uv\in E(G)$. Applying Corollary \ref{cor:SDP} with these vectors, the desired inequality follows.

\section{Lov\'asz theta function versus the number of edges}

In this section we prove Theorems \ref{thm:odd cycles}, \ref{thm:vxplusforest} and \ref{thm:vxplusforestturan}. Using Theorem \ref{thm:MaxCut bound with theta}, these results can be reduced to finding suitable upper bounds for the Lov\'asz theta function of complements of $H$-free graphs $G$ (for suitable $H$) in terms of the number of edges of $G$. The following functions will therefore be of great importance in our proofs.

\begin{definition}
For a graph $H$ and $n, m \in \N$, let $\lambda(n, H)$ and $\mu(m,H)$ denote the maximum of $\vartheta\left(\bar{G}\right)$ over all $H$-free graphs $G$ with $n$ vertices and $m$ edges, respectively.
\end{definition}

$\lambda(n,H)$ was studied extensively in \cite{BLS20} and we will use the results therein as a black box to obtain bounds on $\mu(m,H)$, via a lemma which relates these two functions. Before we state this result, we need a little preparation. The next lemma gives two further (equivalent) definitions of the Lov\'asz theta function.

\begin{lemma}[\cite{Lov79}] \label{lem:Lovasz Gram}
The Lov\'asz theta function of a graph $G$ is equal to both of the following quantities: 
\begin{enumerate}
    \item the maximum of the largest eigenvalue of the Gram matrix $M$ defined by $M_{u,v} = \inprod{\vx_u}{\vx_v}$ for $u,v \in V(G)$, over all collections of unit vectors $\{\vx_v : v \in V(G) \}$ (in some Euclidean space) satisfying $\langle \vx_u,\vx_v\rangle = 0$ for all $uv \in E(G)$,
    \item the maximum of $\sum_{v \in V(G)}{\inprod{\vx}{\vx_v}^2}$ over all collections of unit vectors $\{\vx\}\cup \{\vx_v : v \in V(G) \}$ (in some Euclidean space) satisfying $\langle \vx_u,\vx_v\rangle = 0$ for all $uv \in E(G)$. 
\end{enumerate}
\end{lemma}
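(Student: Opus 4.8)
The plan is to show that the two quantities in the statement — call them $\vartheta_1$ (the largest-eigenvalue version) and $\vartheta_2$ (the sum-of-squares version) — both equal $\vartheta(G)$ as defined in Definition~\ref{def:strict vector chromatic}. Throughout I would assume that $G$ is not complete, so that $\vartheta(G)\ge 2$, and use repeatedly that for unit vectors $\{\vx_v\}$ with Gram matrix $M$, the matrices $M$ and $\sum_v\vx_v\vx_v^{\top}$ are positive semidefinite with the same nonzero spectrum, hence the same largest eigenvalue. The equality $\vartheta_1=\vartheta_2$ is then immediate: for unit vectors with $\langle\vx_u,\vx_v\rangle=0$ on $E(G)$ and any unit vector $\vx$ one has $\sum_v\langle\vx,\vx_v\rangle^2=\vx^{\top}\big(\sum_v\vx_v\vx_v^{\top}\big)\vx$, so maximising first over $\vx$ and then over the configuration produces exactly $\vartheta_1$.

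Next I would prove $\vartheta_1\le\vartheta(G)$ using the Schur (entrywise) product. Let $M$ realise $\vartheta_1$, so that $M\succeq 0$, $M_{vv}=1$ and $M_{uv}=0$ for $uv\in E(G)$; put $\vartheta:=\vartheta(G)$ and let $N$ be the Gram matrix of unit vectors realising $\vartheta$ in Definition~\ref{def:strict vector chromatic}, so $N\succeq 0$, $N_{vv}=1$ and $N_{uv}=-\tfrac1{\vartheta-1}$ whenever $u\ne v$ and $uv\notin E(G)$. Since $M-I$ is supported on the non-edges of $G$, a comparison of entries gives the identity $M\circ N=\tfrac{\vartheta}{\vartheta-1}I-\tfrac1{\vartheta-1}M$. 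As $M\circ N\succeq 0$ by the Schur product theorem, this forces $\vartheta I-M\succeq 0$, hence $\lambda_{\max}(M)\le\vartheta$; maximising over $M$ gives $\vartheta_1\le\vartheta(G)$.

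The hard part will be the reverse inequality $\vartheta(G)\le\vartheta_1$, namely extracting from an optimal $\vartheta_1$-configuration the \emph{constant}-inner-product vectors that Definition~\ref{def:strict vector chromatic} demands; I would handle this via semidefinite duality. Consider the program $\vartheta_{\mathrm{SDP}}=\max\{\langle J,X\rangle : X\succeq 0,\ \tr X=1,\ X_{uv}=0 \text{ for all } uv\in E(G)\}$, where $J$ is the all-ones matrix. Writing a feasible $X$ as the Gram matrix of vectors $\{\mathbf{z}_v\}$, so that $\langle J,X\rangle=\big\|\sum_v\mathbf{z}_v\big\|^2$ and $\sum_v\|\mathbf{z}_v\|^2=1$, and normalising to $\vx_v=\mathbf{z}_v/\|\mathbf{z}_v\|$ (a fresh orthonormal vector when $\mathbf{z}_v=0$) yields a valid $\vartheta_1$-configuration with Gram matrix $M$ such that $\mathbf{w}^{\top}M\mathbf{w}=\langle J,X\rangle$, where $\mathbf{w}=(\|\mathbf{z}_v\|)_v$ is a unit vector; hence $\langle J,X\rangle\le\lambda_{\max}(M)\le\vartheta_1$, and maximising over $X$ gives $\vartheta_{\mathrm{SDP}}\le\vartheta_1$. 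On the other hand $X=\tfrac1{|V(G)|}I$ is a strictly feasible point, so strong duality applies and $\vartheta_{\mathrm{SDP}}$ equals the value of the dual, which — writing $A=J-Z$ with $Z$ symmetric and supported on the edges of $G$ — reads $\min\{\lambda_{\max}(A) : A=A^{\top},\ A_{vv}=1,\ A_{uv}=1 \text{ for all } u\ne v \text{ with } uv\notin E(G)\}$, attained by some $A$. Then $B=\tfrac1{\lambda_{\max}(A)-1}\big(\lambda_{\max}(A)I-A\big)$ is positive semidefinite with $B_{vv}=1$ and $B_{uv}=-\tfrac1{\lambda_{\max}(A)-1}$ on the non-edges of $G$, so a Gram factorisation of $B$ certifies, through Definition~\ref{def:strict vector chromatic}, that $\vartheta(G)\le\lambda_{\max}(A)=\vartheta_{\mathrm{SDP}}$. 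Chaining the inequalities gives $\vartheta(G)\le\vartheta_{\mathrm{SDP}}\le\vartheta_1\le\vartheta(G)$. I expect the only genuinely delicate point to be this duality step — identifying $\vartheta_{\mathrm{SDP}}$ with its dual and checking attainment — and I note that it can be sidestepped by simply invoking Lov\'asz~\cite{Lov79} and Karger, Motwani and Sudan~\cite{KMS98}, where all of these equivalences already appear.
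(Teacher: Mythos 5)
Your proof is correct, but it is worth noting that the paper does not prove this lemma at all: it is quoted as a known result with a citation to Lov\'asz \cite{Lov79} (and the equivalence with Definition~\ref{def:strict vector chromatic} is due to Karger, Motwani and Sudan \cite{KMS98}). What you have done is reconstruct, self-contained, the relevant portion of Lov\'asz's chain of equivalent definitions: the identification $\vartheta_1=\vartheta_2$ via the equality of the nonzero spectra of $V^{\top}V$ and $VV^{\top}$ is exactly right; the Schur-product argument cleanly gives $\lambda_{\max}(M)\le\vartheta(G)$ (the identity $M\circ N=\tfrac{\vartheta}{\vartheta-1}I-\tfrac1{\vartheta-1}M$ checks out entrywise precisely because $M$ vanishes on the edges, where $N$ is unconstrained); and the reverse inequality correctly routes through the SDP $\max\{\langle J,X\rangle\}$ and its dual $\min\lambda_{\max}(A)$, with Slater's condition ($X=I/n\succ0$) justifying dual attainment. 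Three small points deserve a sentence each if this were written out in full: (i) the exclusion of complete graphs is genuinely necessary, since Definition~\ref{def:strict vector chromatic} forces $\vartheta\ge2$ while items 1 and 2 give $1$ for $K_n$ — this is a quirk of the paper's definition, not of your argument; (ii) to divide by $\lambda_{\max}(A)-1$ you need $\lambda_{\max}(A)\ge2$, which follows for non-complete $G$ from the principal $2\times2$ submatrix $\left(\begin{smallmatrix}1&1\\1&1\end{smallmatrix}\right)$ indexed by a non-edge; (iii) attainment of the various maxima and of the minimum in Definition~\ref{def:strict vector chromatic} is a routine compactness check (one may restrict to dimension $|V(G)|$). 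Compared with simply citing \cite{Lov79}, your route buys a complete and fairly short proof; the cost is the reliance on SDP strong duality, which \cite{Lov79} avoids by more hands-on linear-algebraic arguments.
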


For the following lemma, we will use the well known fact that the largest eigenvalue of a matrix $M$ is at most the maximum over all rows $r$ of the sum $\sum_{i}{|M_{r,i}|}$. 

\begin{lemma} \label{lemma:theta degree bound}
For any graph $G$ with maximum degree $\Delta$ we have 
\begin{enumerate}[label=(\alph*)]
    \item $\vartheta\left(\bar{G}\right) \leq \Delta+1$ and
    \item $\vartheta\left(\bar{G}\right)^2 \leq (1 + \Delta)\left(1+\max\left\{ \vartheta\left( \overline{G[N(v)]} \right) : v \in V(G) \right\}\right)$.
\end{enumerate} 

\end{lemma}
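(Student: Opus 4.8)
The plan is to use the eigenvalue/Gram-matrix characterisation of $\vartheta$ from Lemma~\ref{lem:Lovasz Gram}(1), combined with the ``row-sum'' bound on the largest eigenvalue mentioned just before the statement. Fix an optimal unit-vector representation $\{\vx_v : v \in V(G)\}$ with $\langle \vx_u,\vx_v\rangle=0$ for all $uv\in E(G)$, so that $\vartheta(\bar G)$ equals the largest eigenvalue of the Gram matrix $M$ with $M_{u,v}=\langle \vx_u,\vx_v\rangle$. For part~(a), I would simply observe that in row $v$ the diagonal entry is $\|\vx_v\|^2=1$, the off-diagonal entries corresponding to edges $uv\in E(G)$ vanish, and every other entry has absolute value at most $1$ by Cauchy--Schwarz; there are at most $\Delta$ vertices $u\ne v$ with $uv\notin E(G)$ contributing—wait, that is the wrong direction, since non-neighbours are many. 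So instead: the entries that are \emph{not} forced to be zero are the diagonal entry and the entries indexed by non-neighbours of $v$ in $G$, i.e.\ the neighbours of $v$ in $\bar G$. That is too many. The correct move for (a) is to recall that $\vartheta(\bar G)\le \chi(\bar G)$ or rather to use the complementary bound: actually the clean route is $\vartheta(\bar G)\le n/\alpha(\bar G)\cdot\dots$—let me reconsider. The standard fact is $\vartheta(\bar G)\le \Delta(G)+1$ because $\vartheta(\bar G)$ is at most the fractional chromatic number of $\bar G$, hence at most the clique cover number of $G$... This is getting complicated; the honest approach is: for part~(a) apply the row-sum bound to the \emph{other} Gram matrix, the one from the definition via $\bar G$, where zeros are placed on \emph{non-edges of} $\bar G$, i.e.\ on edges of $G$. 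Then row $v$ has the diagonal $1$, zeros on the (at most, no—exactly) $\deg_G(v)\le\Delta$ positions indexed by $G$-neighbours, and the remaining off-diagonal entries are unconstrained. That still leaves too many nonzero entries. I will therefore instead invoke the well-known inequality $\vartheta(\bar G)\le \Delta+1$ directly as a standard fact, or derive it from $\vartheta(\bar G)\vartheta(G)\ge n$ together with $\vartheta(G)\ge n/(\Delta+1)$ (the latter since an independent set representation... ). To keep the paper self-contained, the cleanest proof of (a) I would actually write uses part~(1) of Lemma~\ref{lem:Lovasz Gram} applied to $G$ itself paired with the sandwich $\vartheta(\bar G)\le \bar\chi_f(G)$—on reflection, I expect the authors simply cite $\vartheta(\bar G)\le\Delta+1$ as classical (it follows from $\vartheta(\bar G)\le$ the fractional chromatic number of $\bar G$, which for a graph of maximum degree $\Delta$ is at most $\Delta+1$), so I would state it with a one-line justification.

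For part~(b), which is the substantive claim, I would use Lemma~\ref{lem:Lovasz Gram}(1): pick unit vectors $\{\vx_v\}$ with $\langle\vx_u,\vx_v\rangle=0$ on edges of $G$ realising $\vartheta(\bar G)$ as $\lambda_{\max}(M)$. By the row-sum bound,
\[
\vartheta(\bar G)=\lambda_{\max}(M)\le \max_{v}\sum_{u}|M_{v,u}| = \max_v\Bigl(1+\sum_{u\colon uv\notin E(G),\,u\ne v}|\langle\vx_v,\vx_u\rangle|\Bigr).
\]
The point is to control, for a fixed vertex $v$, the quantity $S_v:=\sum_{u\in N_{\bar G}(v)}|\langle\vx_v,\vx_u\rangle|$ where $N_{\bar G}(v)$ is the set of non-neighbours of $v$ in $G$. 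Split $N_{\bar G}(v)=\{v'\colon vv'\notin E(G)\}$; among these we want to isolate the contribution that ``sees'' the neighbourhood $N_G(v)$. The key structural observation is that for $u\in N_G(v)$ (a $G$-neighbour of $v$) we have $\langle\vx_v,\vx_u\rangle=0$, so those vertices contribute nothing; the nonzero contributions come from $\bar G$-neighbours of $v$, and we want a bound of the form $S_v\lesssim \sqrt{\Delta\cdot\vartheta(\overline{G[N(v)]})}$ so that squaring gives the stated product. The natural tool is Cauchy--Schwarz: $S_v\le \sqrt{|N_{\bar G}(v)|}\cdot\sqrt{\sum_{u}\langle\vx_v,\vx_u\rangle^2}$—but $|N_{\bar G}(v)|$ can be huge, so this crude bound fails. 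Instead one should recognise $\sum_{u}\langle\vx_v,\vx_u\rangle^2$ via Lemma~\ref{lem:Lovasz Gram}(2) applied with $\vx=\vx_v$ as (at most) $\vartheta(\bar G)$ itself, which would only give $\vartheta(\bar G)^2\le (1+\Delta)\vartheta(\bar G)$—too weak, and also circular in the wrong way.

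So the real idea must be a two-level decomposition. I would partition the non-neighbours of $v$ according to their relationship with $N_G(v)$: let $W=N_G(v)$ (size $\le\Delta$), and for each $w\in W$ note $\langle\vx_v,\vx_w\rangle=0$. I expect the argument projects the vectors $\{\vx_u : u\in N_{\bar G}(v)\}$ onto the span of $\{\vx_v\}\cup\{\vx_w : w\in W\}$ (an at-most-$(\Delta+1)$-dimensional space), bounds the contribution of the $\vx_v$-component by the theta-value of the second-neighbourhood structure, and uses that the vectors assigned to the neighbourhood $G[N(v)]$ (which are mutually constrained by the edges inside $N(v)$) give a representation of $\overline{G[N(v)]}$, so that $\sum_{w\in W}\langle\vx_v,\dots\rangle^2$-type sums are governed by $\vartheta(\overline{G[N(v)]})$ through Lemma~\ref{lem:Lovasz Gram}(2). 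Concretely, I would write each $\vx_u$ for $u\in N_{\bar G}(v)$ in terms of its component along $\vx_v$ and its components along an orthonormal basis of $\mathrm{span}\{\vx_w\}$; then $|\langle\vx_v,\vx_u\rangle|$ is exactly the size of the $\vx_v$-component, and summing over $u$ and applying Cauchy--Schwarz \emph{within} the $(\Delta+1)$-dimensional projected space yields a factor $\sqrt{\Delta+1}$ times the square root of a sum of squared inner products; that remaining sum is then bounded by $\vartheta(\overline{G[N(v)]})+1$ using the fact that the vectors indexed by $N_G(v)$ form a valid representation for that induced subgraph's complement (together with one extra vector). The main obstacle—and the step I would think hardest about—is making this projection argument precise: identifying exactly which set of vectors plays the role of the ``$\{\vx_v\}\cup\{\vx_w\}$'' in an application of Lemma~\ref{lem:Lovasz Gram}(2) for $\overline{G[N(v)]}$, and checking that the orthogonality relations ($\langle\vx_w,\vx_{w'}\rangle=0$ for $ww'\in E(G[N(v)])$) are inherited so that the maximum in that lemma genuinely dominates the sum we produced. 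Once that correspondence is set up, the inequality $\vartheta(\bar G)\le 1+\sqrt{(1+\Delta)\bigl(1+\max_v\vartheta(\overline{G[N(v)]})\bigr)}$ follows, and (b) is obtained after squaring (absorbing the $+1$ and cross terms into the stated clean form, possibly with a mild adjustment I would verify in the routine calculation).
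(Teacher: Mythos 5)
There is a genuine gap, and it stems from a single sign error that derails both parts: you have the orientation of the Gram-matrix representation backwards. To realise $\vartheta(\bar G)$ via Lemma~\ref{lem:Lovasz Gram}(1) you must apply that lemma to $\bar G$, which places the orthogonality constraints on the edges of $\bar G$, i.e.\ on the \emph{non-edges} of $G$. So in the correct representation, row $v$ of $M$ has the diagonal entry $1$, possibly nonzero entries only at the at most $\Delta$ positions $u\in N_G(v)$, and zeros everywhere else. You repeatedly set up the representation with zeros on the edges of $G$ instead (and at one point assert that ``non-edges of $\bar G$'' means ``edges of $G$'' should carry the zeros, which is again the wrong placement), which is why the row-sum bound appears to leave ``too many nonzero entries'' and why you abandon it for (a) in favour of an uncommitted appeal to classical facts, and why for (b) you end up trying to control a sum over all non-neighbours of $v$ via a projection argument that you acknowledge you cannot make precise.

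With the orientation corrected, both parts are short and your intended tools work exactly as you hoped. Part (a): the row sum is $1+\sum_{u\in N(v)}|\inprod{\vx_u}{\vx_v}|\le 1+\Delta$. Part (b): Cauchy--Schwarz over the at most $1+\Delta$ nonzero terms of row $v$ gives
\[
\Bigl(\textstyle\sum_{u}|\inprod{\vx_u}{\vx_v}|\Bigr)^2 \le (1+\Delta)\Bigl(1+\textstyle\sum_{u\in N(v)}\inprod{\vx_u}{\vx_v}^2\Bigr),
\]
and the inner sum is at most $\vartheta\bigl(\overline{G[N(v)]}\bigr)$ by Lemma~\ref{lem:Lovasz Gram}(2) applied to $\overline{G[N(v)]}$ with $\vx=\vx_v$: the vectors $\{\vx_u: u\in N(v)\}$ are pairwise orthogonal precisely on the non-edges of $G$, i.e.\ on the edges of $\overline{G[N(v)]}$, which is the hypothesis of that statement. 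This also removes the spurious extra ``$+1$'' outside the square root in your final inequality (the diagonal term is absorbed inside the Cauchy--Schwarz), so no ``mild adjustment'' is needed to reach the stated clean form.
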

\begin{proof}
By statement 1 of Lemma~\ref{lem:Lovasz Gram}, there exists a collection $\C = \{\vx_v : v \in V(G) \}$ of unit vectors such that their Gram matrix $M$ has largest eigenvalue $\vartheta\left(\bar{G}\right)$ and $\inprod{\vx_u}{\vx_v} = 0$ for all $u \neq v$ such that $uv \notin E(G)$. Then we have
\begin{equation}
    \vartheta\left( \bar{G} \right) \leq \max_{v \in V(G)}{\sum_{u \in V(G)}{|\inprod{\vx_u}{\vx_v}|}}. \label{eqn:eigenvalue}
\end{equation}

Since $|\inprod{\vx_u}{\vx_v}| \leq 1$, we have that $\sum_{u \in V(G)}{|\inprod{\vx_u}{\vx_v}|} \leq 1 + \Delta$ for all $v \in V(G)$, so that claim (a) of Lemma \ref{lemma:theta degree bound} follows. Moreover, for all $v \in V(G)$ it follows from statement 2 of Lemma~\ref{lem:Lovasz Gram} that
$$\sum_{u \in V(G)}{\inprod{\vx_u}{\vx_v}^2}=1+\sum_{u\in N(v)} {\inprod{\vx_u}{\vx_v}^2}\leq 1+\max_{\|\vx\|=1} \sum_{u\in N(v)} {\inprod{\vx_u}{\vx}^2}\leq 1+\vartheta\left(\overline{G[N(v)]}\right)$$ and therefore using the Cauchy--Schwarz inequality we conclude $\left(\sum_{u \in V(G)}{|\inprod{\vx_u}{\vx_v}|}\right)^2 \leq (1 + \Delta) \left(1+\vartheta\left(\overline{G[N(v)]}\right)\right)$, so that claim (b) of Lemma \ref{lemma:theta degree bound} follows from equation (\ref{eqn:eigenvalue}).
\end{proof}

By partitioning the graph into vertices of high and low degree, we show in the following lemma that an upper bound on $\lambda(n, H)$ implies an upper bound on $\mu(m,H)$. We will need the well-known fact that $\vartheta\left(\bar{G}\right) \leq \chi(G)$ for all graphs $G$ (see e.g.\ Knuth \cite{Knuth94}) and the following simple lemma (see e.g.\ Corollaries 1.5.4 and 5.2.3 of Diestel \cite{D12}).

\begin{lemma} \label{lem:acyclic-free}
    If $H$ is acyclic, then any $H$-free $G$ has chromatic number $\chi(G) \leq |V(H)| - 1$. In particular, for any $H$-free graph $G$, we have $\vartheta(\bar{G})\leq |V(H)|-1$.
\end{lemma}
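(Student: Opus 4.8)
The plan is to first establish the purely combinatorial claim that an $H$-free graph has chromatic number at most $|V(H)|-1$ when $H$ is acyclic, and then invoke the well-known inequality $\vartheta(\bar G)\le\chi(G)$ (quoted from Knuth~\cite{Knuth94} in the excerpt) to obtain the second sentence immediately. So the real content is the chromatic bound, which is the statement referenced as Corollary~1.5.4/5.2.3 of Diestel~\cite{D12}.

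For the chromatic bound I would argue as follows. Set $k=|V(H)|$; since $H$ is a forest on $k$ vertices it has at most $k-1$ edges, and in particular a forest on $k$ vertices has a vertex of degree at most $1$, so every subforest has minimum degree $\le 1$. More usefully, any forest $F$ with $k$ vertices is a subgraph of \emph{every} graph with minimum degree at least $k-1$: one embeds $F$ greedily vertex by vertex in an order extending the tree structure (each new vertex of $F$ has exactly one already-embedded neighbour, and since the host graph has minimum degree $\ge k-1\ge$ (number of vertices already used), there is always an unused vertex adjacent to the image of that neighbour). Consequently, if $G$ is $H$-free then $G$ cannot have minimum degree $\ge k-1$; the same applies to every subgraph of $G$, so $G$ is $(k-2)$-degenerate. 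A $(k-2)$-degenerate graph is $(k-1)$-colourable (colour greedily in a degeneracy order, each vertex seeing at most $k-2$ earlier neighbours), giving $\chi(G)\le k-1=|V(H)|-1$.

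Finally, combining $\chi(G)\le |V(H)|-1$ with $\vartheta(\bar G)\le\chi(G)$ yields $\vartheta(\bar G)\le |V(H)|-1$ for every $H$-free $G$, which is the "in particular" clause.

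The only mild subtlety — and the step I would be most careful about — is the greedy embedding of the forest: one must embed the components one at a time and, within a component, in an order where each vertex after the first has a unique earlier neighbour, checking that the count of forbidden host-vertices (at most the number of vertices of $H$ already placed, which is at most $k-1$) never exceeds the degree $k-1$ available, leaving room. Since the paper is content to cite Diestel for this, I would simply state the degeneracy/colouring chain and reference~\cite{D12} rather than writing the embedding out in full.
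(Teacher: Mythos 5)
Your proof is correct and is exactly the standard argument that the paper outsources to Diestel \cite{D12}: minimum degree at least $|V(H)|-1$ forces a copy of any forest on $|V(H)|$ vertices, so $H$-free graphs are $(|V(H)|-2)$-degenerate and hence $(|V(H)|-1)$-colourable, after which $\vartheta(\bar G)\le\chi(G)$ finishes the job. The greedy embedding step you flag is fine as written (when a new vertex of $H$ is placed, at most $|V(H)|-2$ used vertices other than the image of its unique embedded neighbour can block, against $|V(H)|-1$ available neighbours), so there is nothing to add.
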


We are now ready to state and prove the lemma connecting $\lambda(n,H)$ to $\mu(m,H)$.

\begin{lemma} \label{lemma:vertex to edge}
Fix $\alpha \geq 1$ and let $H$ be a graph such that $\lambda(n , H) = O_H\left(n^{1/\alpha}\right)$. Then 
$$
\mu(m,H) = O_H \left(m^{1/(\alpha + 1)} \right)
$$    
and moreover, if $H$ has a vertex such that removing it makes $H$ acyclic then 
$$
\mu(m,H) = O_H \left(m^{1/(\alpha + 2)} \right).
$$
\end{lemma}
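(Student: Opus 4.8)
The plan is a high\nobreakdash-degree/low\nobreakdash-degree split. Fix a threshold $D$ (to be optimised at the end), let $V_{\ge D}$ be the set of vertices of $G$ of degree at least $D$ and let $V_{<D}$ be the rest. The first ingredient is the subadditivity inequality: for \emph{any} partition $V(G)=V_1\cup V_2$ we have
$$\vartheta(\overline G)\le \vartheta\bigl(\overline{G[V_1]}\bigr)+\vartheta\bigl(\overline{G[V_2]}\bigr).$$
I will prove this directly from statement 2 of Lemma~\ref{lem:Lovasz Gram}: take an optimal configuration $\{\vx\}\cup\{\vx_v:v\in V(G)\}$ of unit vectors for $\vartheta(\overline G)$, so $\langle\vx_u,\vx_v\rangle=0$ for every non\nobreakdash-edge $uv$ of $G$. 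Restricting to $\{\vx\}\cup\{\vx_v:v\in V_i\}$ gives a feasible configuration for $\vartheta(\overline{G[V_i]})$ (non\nobreakdash-edges of $G[V_i]$ are non\nobreakdash-edges of $G$), so $\sum_{v\in V_i}\langle\vx,\vx_v\rangle^2\le\vartheta(\overline{G[V_i]})$; summing over $i\in\{1,2\}$ yields the claim.

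For the high\nobreakdash-degree part, $\sum_v\deg_G(v)=2m$ forces $|V_{\ge D}|\le 2m/D$, and since $G[V_{\ge D}]$ is $H$\nobreakdash-free, the hypothesis gives $\vartheta(\overline{G[V_{\ge D}]})\le\lambda(|V_{\ge D}|,H)=O_H\bigl((m/D)^{1/\alpha}\bigr)$. For the low\nobreakdash-degree part, $G[V_{<D}]$ has maximum degree at most $D-1$, so Lemma~\ref{lemma:theta degree bound}(a) gives $\vartheta(\overline{G[V_{<D}]})\le D$. Hence $\vartheta(\overline G)=O_H\bigl((m/D)^{1/\alpha}+D\bigr)$, and choosing $D=\lceil m^{1/(\alpha+1)}\rceil$ balances the two terms and proves $\mu(m,H)=O_H\bigl(m^{1/(\alpha+1)}\bigr)$.

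For the stronger bound when $H-v_0$ is acyclic for some $v_0\in V(H)$, the only change is to estimate the low\nobreakdash-degree part using Lemma~\ref{lemma:theta degree bound}(b) instead of (a). The key observation is that for every vertex $u$ of an $H$\nobreakdash-free graph $G$, the graph $G[N(u)]$ is $(H-v_0)$\nobreakdash-free: an embedding of $H-v_0$ into $G[N(u)]$ extends to an embedding of $H$ into $G$ by sending $v_0$ to $u$, because $u$ is adjacent to every vertex of $N(u)$. Applying this to the $H$\nobreakdash-free graph $G[V_{<D}]$, for each $u\in V_{<D}$ the subgraph of $G[V_{<D}]$ induced by the neighbours of $u$ is an induced subgraph of $G[N_G(u)]$, hence $(H-v_0)$\nobreakdash-free, so by Lemma~\ref{lem:acyclic-free} its complement has Lov\'asz theta function at most $|V(H)|-2$. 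Plugging this together with $\Delta\le D-1$ into Lemma~\ref{lemma:theta degree bound}(b) yields $\vartheta(\overline{G[V_{<D}]})=O_H\bigl(D^{1/2}\bigr)$, so $\vartheta(\overline G)=O_H\bigl((m/D)^{1/\alpha}+D^{1/2}\bigr)$; now choosing $D=\lceil m^{2/(\alpha+2)}\rceil$ balances the terms and gives $\mu(m,H)=O_H\bigl(m^{1/(\alpha+2)}\bigr)$.

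The computations are routine optimisations of the single parameter $D$; the step that needs a little thought is the subadditivity inequality, but, as indicated, it drops out immediately from the ``sum of squared projections'' description of $\vartheta$ in Lemma~\ref{lem:Lovasz Gram}. (One should also note the degenerate cases $|V(H)|\le 2$, where every $H$\nobreakdash-free graph is edgeless and there is nothing to prove.)
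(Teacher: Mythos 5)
Your proposal is correct and follows essentially the same route as the paper: the same degree threshold split, the same subadditivity of $\vartheta(\overline{\,\cdot\,})$ over a vertex partition derived from statement 2 of Lemma~\ref{lem:Lovasz Gram}, and the same use of Lemma~\ref{lemma:theta degree bound}(a) resp.\ (b) together with Lemma~\ref{lem:acyclic-free} on the low-degree part, with the identical choices of threshold. The only difference is cosmetic: you spell out the subadditivity step that the paper dismisses as easy to see.
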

\begin{proof}
Let $d \in \R$ be a positive number to be chosen later and let $G$ be an $H$-free graph with $m$ edges. If we define $ S = \{v \in V(G) : d(v) \leq d\}$ and $T = V(G) \backslash S$, then it is easy to see from statement 2 of Lemma \ref{lem:Lovasz Gram} that
$$
\vartheta\left(\bar{G}\right) \leq \vartheta\left(\overline{G[S]}\right) + \vartheta\left(\overline{G[T]}\right).
$$
Now since $|T| d \leq \sum_{v \in T}{d(v)} \leq 2m$, we have that $|T| \leq 2m / d$ and thus 
$$
\vartheta\left(\overline{G[T]}\right) \leq \lambda(|T|, H) \leq 
O_H\left( \left(\frac{m}{d}\right)^{1/\alpha} \right).
$$
Also, it follows from claim (a) of Lemma \ref{lemma:theta degree bound} that $\vartheta\left(\overline{G[S]}\right) \leq d+1$, so letting $d = m^{1/(\alpha + 1)}$, we conclude the desired bound $\vartheta\left(\bar{G}\right) \leq d + O_H\left((m/d)^{1/\alpha}\right) = O_H\left(m^{1/(\alpha + 1)}\right)$.

Moreover, suppose that $H$ has a vertex such that the graph obtained by removing it, say $H'$, is acyclic. Then for any $v \in S$, the neighborhood of $v$ in $G[S]$ has no copy of $H'$ and hence, by Lemma \ref{lem:acyclic-free}, $\vartheta\left(\overline{G[S\cap N(v)]}\right) \leq |V(H')|-1=|V(H)|-2$. It follows from claim (b) of Lemma \ref{lemma:theta degree bound} that $\vartheta\left(\overline{G[S]}\right) \leq O_H\left( \sqrt{d} \right)$, so that letting $d = m^{2/(\alpha + 2)}$, we conclude $\vartheta\left(\bar{G}\right) \leq O_H\left( \sqrt{d} + (m/d)^{1/\alpha} \right) = O_H\left( m^{1/(\alpha + 2)} \right)$, as desired.
\end{proof}
\begin{remark}
Since any graph $G$ on $n$ vertices satisfies $\vartheta\left( \bar{G} \right) \leq \chi(G) \leq n$, the argument of Lemma \ref{lemma:vertex to edge} can also be used to show that any graph $G$ with $m$ edges satisfies $\vartheta\left(\bar{G} \right) \leq O\left(\sqrt{m}\right)$.  
\end{remark}

It was previously proved  in \cite{BLS20} that $\lambda(n, C_r)=O_r(n^{1/r})$ and $\lambda(n, H) = O_H(n^{\alpha/2})$ if $H$ is a graph that has a vertex whose removal makes $H$ acyclic\footnote{In fact, this was stated in \cite{BLS20} for graphs $H$ which have a vertex whose removal makes $H$ a tree, but the same proof works for this slightly more general class of graphs.} and its Tur\'an number satisfies $\ex(n, H) = O_H\left( n^{1 + \alpha} \right)$ for some constant $\alpha$. Using these bounds, Lemma \ref{lemma:vertex to edge} immediately yields the following corollary. 

\begin{corollary} \label{cor:theta vertex bounds}
For all $r\geq 3$, we have $\mu(m, C_r) = O_r\left(m^{1/(r+2)}\right)$. Moreover, if $H$ is a graph that has a vertex whose removal makes $H$ acyclic and its Tur\'an number satisfies $\ex(n, H) = O_H\left( n^{1 + \alpha} \right)$ for some constant $\alpha$, then $\mu(m, H) = O_{H}\left( m^{\alpha/(2\alpha + 2)} \right)$.
\end{corollary}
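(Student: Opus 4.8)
The plan is simply to feed the two bounds on $\lambda(\cdot,H)$ recalled above from \cite{BLS20} into Lemma~\ref{lemma:vertex to edge}, being careful to match up the exponents and to use the stronger (``moreover'') conclusion of that lemma in both cases; this stronger conclusion is available precisely because the relevant $H$ has a vertex whose deletion makes it acyclic.

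For $C_r$, I would first observe that deleting any single vertex of $C_r$ leaves a path on $r-1$ vertices, which is acyclic, so the hypothesis of the ``moreover'' clause of Lemma~\ref{lemma:vertex to edge} is satisfied. Then, since $\lambda(n,C_r)=O_r(n^{1/r})$, I would apply Lemma~\ref{lemma:vertex to edge} with its parameter $\alpha$ taken to be $r$ (as $r\geq 3\geq 1$, this is legitimate); the ``moreover'' conclusion then reads $\mu(m,C_r)=O_r(m^{1/(r+2)})$, which is exactly the claimed bound.

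For the general statement, given a graph $H$ with a vertex whose removal makes $H$ acyclic and with $\ex(n,H)=O_H(n^{1+\alpha})$, I would first note that $\alpha\leq 1$ (since $\ex(n,H)\leq \binom{n}{2}$), so that the quoted estimate $\lambda(n,H)=O_H(n^{\alpha/2})$ can be rewritten as $\lambda(n,H)=O_H(n^{1/\beta})$ with $\beta:=2/\alpha\geq 2\geq 1$. Applying Lemma~\ref{lemma:vertex to edge} (again its ``moreover'' version, whose hypothesis holds by assumption on $H$) with parameter $\beta$ then yields $\mu(m,H)=O_H(m^{1/(\beta+2)})$, and substituting $\beta=2/\alpha$ and simplifying gives $\mu(m,H)=O_H(m^{\alpha/(2\alpha+2)})$, as required.

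I do not anticipate any substantive obstacle here: the entire content of the corollary is the two black-box estimates from \cite{BLS20} together with Lemma~\ref{lemma:vertex to edge}. The only points that require (minor) attention are translating between the $n^{1/\alpha}$ normalisation built into the statement of Lemma~\ref{lemma:vertex to edge} and the forms $n^{1/r}$ and $n^{\alpha/2}$ in which the $\lambda$-bounds are quoted, and verifying in each application that the parameter passed to Lemma~\ref{lemma:vertex to edge} is at least $1$ so that the lemma applies.
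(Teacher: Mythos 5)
Your proposal is correct and matches the paper's (implicit) argument exactly: the paper likewise obtains the corollary by feeding the two bounds $\lambda(n,C_r)=O_r(n^{1/r})$ and $\lambda(n,H)=O_H(n^{\alpha/2})$ from \cite{BLS20} into the ``moreover'' clause of Lemma~\ref{lemma:vertex to edge}, with the same reparametrisations. Your extra checks (that $C_r$ minus a vertex is acyclic and that the lemma's parameter is at least $1$ in both applications) are exactly the right details to verify.
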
 

\noindent Theorem \ref{thm:odd cycles} and Theorem \ref{thm:vxplusforestturan} now follow directly from Corollary \ref{cor:theta vertex bounds} and Theorem \ref{thm:MaxCut bound with theta}.

Additionally, for any graph $H$ with a vertex whose removal leaves it acyclic, we apply the argument used in \cite{BLS20} based on the trace method in order to obtain a bound on $\lambda(n, H)$ and therefore also $\mu(m, H)$. This bound is better than the one in Corollary \ref{cor:theta vertex bounds} whenever the Tur\'an exponent of $H$ is close to $2$, in particular whenever $H$ is not bipartite.

\begin{theorem} \label{thm:H lambda}
Let $H$ be a graph containing a vertex whose removal makes $H$ acyclic. Then
$$
\lambda(n, H) = O_H\left(n^{1/3}\right) \qquad \text{and} \qquad \mu(m, H) = O_H\left(m^{1/5}\right).
$$
\end{theorem}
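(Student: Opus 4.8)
The plan is to deduce the second statement from the first: once $\lambda(n,H)=O_H(n^{1/3})=O_H(n^{1/\alpha})$ with $\alpha=3$ is known, and since by hypothesis $H$ has a vertex whose removal makes it acyclic, Lemma~\ref{lemma:vertex to edge} immediately gives $\mu(m,H)=O_H(m^{1/(\alpha+2)})=O_H(m^{1/5})$. So the whole task is the bound on $\lambda(n,H)$, which I would prove by a short trace-method computation following \cite{BLS20}.

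Let $w$ be a vertex of $H$ whose removal leaves a forest $F=H-w$, and set $k=|V(F)|-1=|V(H)|-2$. Fix an arbitrary $H$-free graph $G$ on $n$ vertices and put $t=\vartheta(\bar G)$. The first step is the standard local observation: every $G[N(v)]$ is $F$-free, since a copy of $F$ inside $N(v)$ together with $v$ in the role of $w$ would give a copy of $H$ in $G$; hence by Lemma~\ref{lem:acyclic-free}, $\vartheta(\overline{G[N(v)]})\le\chi(G[N(v)])\le k$ for every $v$. Next I would use statement~1 of Lemma~\ref{lem:Lovasz Gram} to fix unit vectors $\{\vx_v:v\in V(G)\}$ with $\langle\vx_u,\vx_v\rangle=0$ whenever $uv\notin E(G)$ ($u\ne v$) and whose Gram matrix $M$ satisfies $\lambda_{\max}(M)=t$; note $M\succeq0$, $M_{vv}=1$, and $M$ is supported on $E(G)$ off the diagonal. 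For each $v$ the vectors $\{\vx_u:u\in N(v)\}$ are unit and orthogonal along the edges of $\overline{G[N(v)]}$ (which are exactly the non-edges of $G$ inside $N(v)$), so two things follow. Using statement~2 of Lemma~\ref{lem:Lovasz Gram} with $\vx_v$ in the role of the distinguished vector gives $\sum_{u\in N(v)}M_{uv}^2=\sum_{u\in N(v)}\langle\vx_v,\vx_u\rangle^2\le k$; and the principal submatrix $M[N(v)]$ is itself admissible for $\vartheta(\overline{G[N(v)]})$ in statement~1, so $\lambda_{\max}(M[N(v)])\le k$. The first of these, summed over $v$, yields $\tr(M^2)=n+\sum_v\sum_{u\in N(v)}M_{uv}^2\le(k+1)n$.

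Finally I would estimate $\tr(M^3)=\sum_{a,b,c}M_{ab}M_{bc}M_{ca}$ by splitting on the number of distinct indices. The degenerate terms (at most two distinct indices) sum to $n+3(\tr(M^2)-n)$, which is $O_H(n)$ by the previous step. The terms with three distinct indices are supported on triangles of $G$, and for a fixed vertex $v$ the total over triangles through $v$ equals $\tfrac12\big((\psi^{(v)})^{\!\top}M[N(v)]\psi^{(v)}-\|\psi^{(v)}\|^2\big)$, where $\psi^{(v)}$ denotes the $v$-th row of $M$ restricted to $N(v)$; since $M[N(v)]\succeq0$ and $\lambda_{\max}(M[N(v)])\le k$, this has absolute value at most $\tfrac{k}{2}\|\psi^{(v)}\|^2=\tfrac{k}{2}\sum_{u\in N(v)}M_{uv}^2\le\tfrac{k^2}{2}$, so summing over $v$ shows the triangle part of $\tr(M^3)$ is also $O_H(n)$. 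Hence $t^3=\lambda_{\max}(M)^3\le\tr(M^3)=O_H(n)$ (using $M\succeq0$), i.e.\ $\vartheta(\bar G)=O_H(n^{1/3})$; as $G$ was arbitrary, $\lambda(n,H)=O_H(n^{1/3})$. The crucial point — and the only place where the hypothesis that $H-w$ is a forest enters — is the pair of $O_H(n)$ estimates on $\tr(M^2)$ and on the triangle part of $\tr(M^3)$: both rest on $M$ being supported on $E(G)$ and on $\vartheta(\overline{G[N(v)]})$ being bounded by the constant $k$, which would break down as soon as $H-w$ is allowed to contain a cycle. Everything else is routine bookkeeping.
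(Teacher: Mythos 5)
Your proof is correct, and it is the same trace-method argument as the paper's: both reduce $\lambda(n,H)=O_H(n^{1/3})$ to showing $\tr(M^3)=O_H(n)$ for the optimal Gram matrix $M$ from statement 1 of Lemma~\ref{lem:Lovasz Gram}, both exploit that each neighbourhood $G[N(v)]$ is $F$-free for the acyclic graph $F=H-w$ and hence has bounded chromatic number, and both finish via Lemma~\ref{lemma:vertex to edge}. The only differences are in bookkeeping. The paper writes $\tr(M^3)=\sum_u \vx_u^\intercal P_u^2\vx_u$ with $P_u=\sum_{v\in N(u)\cup\{u\}}\vx_v\vx_v^\intercal$ and bounds $\lambda_1(P_u)=O_H(1)$ by partitioning $N(u)$ into $O_H(1)$ independent sets and applying Parseval to each; you instead split $\tr(M^3)$ combinatorially into diagonal, degenerate and triangle terms, control $\tr(M^2)$ via statement 2 of Lemma~\ref{lem:Lovasz Gram} applied locally, and control the triangle terms via $\lambda_{\max}(M[N(v)])\le\vartheta(\overline{G[N(v)]})\le k$. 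These are two presentations of the same estimate, since $P_u$ and $M[N(u)\cup\{u\}]$ have the same nonzero spectrum; your route through the local theta function is slightly slicker on the spectral side but pays for it with the extra case analysis in the trace, while the paper's single matrix identity avoids that case analysis. All of your individual steps check out (including the constant $\tfrac{k}{2}\|\psi^{(v)}\|^2$ bound on the per-vertex triangle contribution), so there is no gap.
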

\begin{proof}
Let $G$ be an $H$-free graph on $n$ vertices and let $\{\vx_v: v \in V(G)\}$ be a collection of unit vectors as in Lemma \ref{lem:Lovasz Gram}, chosen so that their Gram matrix given by $M_{u,v} = \inprod{\vx_u}{\vx_v}$ has largest eigenvalue $\vartheta\left(\bar{G}\right)$ and $\inprod{\vx_u}{\vx_v} = 0$ if $u \neq v$ and $uv \notin E(G)$. Note that $M$ is positive semidefinite, i.e.\ all eigenvalues of $M$ are nonnegative, and so we have
$$
\vartheta\left(\bar{G}\right)^3 \leq \tr(M^3) = \sum_{u, v, w \in V(G)}{\vx_u^\intercal \vx_v \vx_v^\intercal \vx_w \vx_w^\intercal \vx_u}
= \sum_{u \in V(G)}{ \vx_u^\intercal \left( \sum_{v \in N(u) \cup \{u\}}{\vx_v \vx_v^\intercal} \right)^2 \vx_u}.
$$
Now fix $u \in V(G)$ and note that since removing a vertex from $H$ leaves a subgraph which is acyclic, we must have $\chi(G[N(u)]) = O_H(1)$. Therefore, we can partition the neighborhood of $u$ into $t$ independent sets $B_1, \ldots, B_t$, where $t = O_H(1)$. Furthermore, define $B_0 = \{u\}$ and observe that for each independent set $B_i$, the corresponding vectors $\{ \vx_v : v \in B_i\}$ are orthogonal. By Parseval's identity, this implies that, setting $P_i = \sum_{v \in B_i}{\vx_v \vx_v^\intercal}$, we have $\vy^T P_i \vy=\sum_{v\in B_i} \langle \vy,\vx_v\rangle^2\leq \|\vy\|^2$ for any $\vy$, so $\lambda_1(P_i)\leq 1$. Hence, $P = \sum_{i=0}^t{P_i}$ satisfies $\lambda_1(P) \leq \sum_{i=0}^t{\lambda_1(P_i)} = O_H(1)$.  Moreover, $P$ is positive semidefinite, so $\lambda_1(P^2)=\lambda_1(P)^2$. Therefore,
$$
\vx_u^\intercal \left(\sum_{v \in N(u) \cup \{u\}}{\vx_v \vx_v^\intercal} \right)^2 \vx_u 
=  \vx_u^\intercal P^2 \vx_u 
\leq \lambda_1\left( P^2 \right)
= \lambda_1(P)^2
= O_H(1)
$$
for all $u \in V(G)$ and so we conclude that $\vartheta\left(\bar{G}\right)^3 \leq \sum_{u \in V(G)}{O_H(1)} = O_H(n)$. Since $G$ was arbitrary, we have established the first claim $\lambda(n, H) = O_H\left(n^{1/3}\right)$, from which the second claim $\mu(m, H) = O_H\left( m^{1/5}\right) $ follows via Lemma \ref{lemma:vertex to edge}.
\end{proof}

Note that Theorem \ref{thm:vxplusforest} follows from Theorem \ref{thm:H lambda} and Theorem \ref{thm:MaxCut bound with theta}.

\section{Concluding remarks}

In this paper we showed that if $G$ is a graph with $m$ edges, then
\begin{equation}
    \sp(G)\geq \frac{1}{\pi}\cdot \frac{m}{\vartheta(\bar{G})-1}. \label{eqn:lower bound for sp}
\end{equation}

This inequality further motivates the study of how large $\vartheta(\bar{G})$ can be for an $H$-free graph $G$. Regarding the case where $H$ is a clique, Alon and Szegedy \cite{ASz99} proved that for each $\varepsilon>0$ there exists some $t$ such that for any sufficiently large $m$ there are $K_t$-free graphs $G$ with $m$ edges satisfying $\vartheta(\bar{G})\geq m^{1/2-\varepsilon}$. This shows that one cannot use (\ref{eqn:lower bound for sp}) to prove that for all~$t$, $\sp(m,K_t)=\Omega_t(m^{\alpha})$ holds for some absolute constant $\alpha>1/2$. Balla \cite{Bal23} generalized the construction of Alon and Szegedy and proved that for each $\varepsilon>0$ there exists some $t$ such that for any sufficiently large $m$ there are $K_{t,t}$-free graphs $G$ with $m$ edges satisfying 
$\vartheta(\bar{G})\geq m^{1/2-\varepsilon}$. Moreover, it was shown in \cite{Bal23} that both these constructions have also 
$\chi_{\textrm{vec}}(G)\geq m^{1/2-\varepsilon}$. This implies that even our stronger result, Theorem \ref{thm:MaxCut bound with vecchrom}, cannot be used to prove that $\sp(m,K_t)=\Omega_t(m^{\alpha})$ or $\sp(m,K_{t,t})=\Omega_t(m^{\alpha})$ for some absolute constant $\alpha>1/2$ and all $t$.

It would be interesting to determine under what conditions the bound (\ref{eqn:lower bound for sp}) is tight (up to a constant factor). A sufficient condition is that $G$ is both vertex-transitive and edge-transitive. Indeed, it was shown in \cite{Lov79} that for any $n$-vertex graph $G$, if $G$ is vertex-transitive, then $\vartheta(G)\vartheta(\bar{G})=n$, and that if $G$ is edge-transitive, then $\vartheta(G)=-\frac{n\lambda_n}{\lambda_1-\lambda_n}$, where $\lambda_1$ and $\lambda_n$ are the largest and smallest eigenvalue of the adjacency matrix of $G$, respectively. Noting that $\lambda_1-\lambda_n=\Theta(\lambda_1)=\Theta(m/n)$ (as $G$ is regular), it follows that the right hand side of (\ref{eqn:lower bound for sp}) has order of magnitude $-n\lambda_n$, and hence $\sp(G)=\Omega(-n\lambda_n)$. On the other hand, it is well known \cite{DP93} that $\sp(G)=O(-n\lambda_n)$ holds for any $n$-vertex graph $G$, so (\ref{eqn:lower bound for sp}) is indeed tight up to a constant factor when $G$ is both vertex-transitive and edge-transitive.

While our lower bound (\ref{eqn:lower bound for sp}) for $\sp(G)$ is not tight in general, it is known, by a variant of Grothendieck's inequality (see \cite{Meg01} or \cite{CW04}), that the optimal value for the SDP relaxation of $\sp(G)$, namely the lower bound given in our Corollary \ref{cor:SDP}, is within a $\log n$ factor of the true value of $\sp(G)$. On a related note, we point out that in \cite{AMMN06}, $\vartheta(\bar{G})$ was used to bound the Grothendieck constant of $G$.

In \cite{S79}, Schrijver defined a variant of the theta function $\vartheta'(G)$, which also has several equivalent reformulations. Just like how the strict vector chromatic number of $\bar{G}$ is equivalent to the Lov\'asz theta function $\vartheta(G)$, the vector chromatic number $\chi_{\textrm{vec}}(\bar{G})$ is equivalent to Schrijver's theta function $\vartheta'(G)$, a fact which seems to have been first observed by Szegedy \cite{S94}. In view of Theorem \ref{thm:MaxCut bound with vecchrom}, it is natural to ask whether our arguments for bounding $\vartheta\left(\bar{G}\right)$ can be improved for $\vartheta'\left(\bar{G}\right)$. However, these arguments rely on inequalities involving the squares of the inner products $\inprod{\vx_u}{\vx_v}$ and therefore, it is not clear how one would make use of the additional assumption that $\inprod{\vx_u}{\vx_v} \geq 0$. Nonetheless, we note that Best (see \cite{S79}) constructed a graph satisfying $\vartheta'(G) < \vartheta(G)$, so the vector chromatic number sometimes provides a better bound for the surplus of a graph than $\vartheta\left(\bar{G}\right)$. It would therefore be interesting to determine how large the gap between $\vartheta'$ and $\vartheta$ can be.

\paragraph{Acknowledgements.}
We thank the referees for their careful review and valuable suggestions which improved the presentation.

\bibliographystyle{abbrv}
\bibliography{bibliography}

\end{document}